\newcommand\blfootnote[1]{%
  \begingroup
  \renewcommand\thefootnote{}\footnote{#1}%
  \addtocounter{footnote}{-1}%
  \endgroup
}
\newtheorem{theorem}{Theorem}
\newtheorem{prop}{Proposition}
\newtheorem{lemma}{Lemma}
\newtheorem*{remark}{Remark}
\newtheorem{claim}{Claim}
\newtheorem{cor}{Corollary}
\numberwithin{equation}{section}
\newtheorem{question}{Question}
\def\XXint#1#2#3{{\setbox0=\hbox{$#1{#2#3}{\int}$}
  \vcenter{\hbox{$#2#3$}}\kern-.5\wd0}}
\author{Gang Liu}
\address{School of Mathematical Sciences, Shanghai Key Laboratory of PMMP, East China Normal University}
\email{gliu@math.ecnu.edu.cn}
\title[Complete K\"ahler manifolds with nonnegative Ricci curvature II]{Complete K\"ahler manifolds with nonnegative Ricci curvature II}
\date{}
\begin{document}
\begin{abstract}
We study rigidity on certain K\"ahler manifolds with nonnegative Ricci curvature. Among others things, we show that a complete noncompact K\"ahler surface with nonnegative Ricci curvature, Euclidean volume growth and quadratic curvature decay is biholomorphic to resolution of an affine algebraic variety. 

\end{abstract}
\maketitle

\blfootnote{The author is partially supported by National Key Research and Development Program of China (No. 2022YFA1005502), NSFC No. 12071140, the New Cornerstone Science Foundation through the New Cornerstone Investigator Program
and the Xplorer Prize, Science and Technology Commission of Shanghai Municipality (No. 22DZ2229014)}

\section{Introduction}

In \cite{[L2]}, the author studied certain complete K\"ahler manifolds with nonnegative Ricci curvature. The topic is divided into two parts: 

\medskip

(I)  when the holomorphic bisectional curvature is nonnegative, where the author established sharp estimates among curvature integral, dimension of polynomial growth holomorphic functions and the volume.

(II) Characterization of Ricci flat K\"ahler metrics from metric tangent cone at infinity.

\medskip 

In this note, we shall focus on the case when the Ricci curvature is nonnegative while the metric is not necessarily Ricci flat. Our results are motivated by a question of Ni \cite{[Ni]} , as well as conjecture $2$ in \cite{[L2]} on uniqueness of scalar curvature integral at infinity. Note also the integral of scalar curvature is also related with conjectures of Yau \cite{[Yau]}, Gromov \cite{[Gromov]}, Naber \cite{[Naber]}.  It is worth to point out that the uniqueness of scalar curvature integral also has topological application, see Corollary \ref{cor3}.

\begin{theorem}\label{thm1}
Let $M^n (n\geq 2)$ be a complete noncompact K\"ahler manifold with nonnegative Ricci curvature, Euclidean volume growth and quadratic curvature decay. Assume one tangent cone at infinity is Q-Gorenstein, then all tangent cones are Q-Gorenstein. Moreover, for all $k>0$, $\lim\limits_{r\to\infty}r^{2k-2n}\int_{B(p, r)}Ric^k\wedge \omega^{n-k}$ exists (finite).
\end{theorem}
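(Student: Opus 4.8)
\emph{Sketch of the proof I would attempt.} The plan is to reduce both statements to the structure of the tangent cones at infinity and to a single boundary term controlled by a pluricanonical section near infinity.

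First I would record the structure of the tangent cones. By Cheeger--Colding theory every tangent cone at infinity is a metric cone $C(Y)$, and quadratic curvature decay together with the $\epsilon$-regularity theorem makes the rescaled spaces $(M,r^{-2}g,p)$ converge to $C(Y)$ in $C^{1,\alpha}$ on annuli away from the vertex $o$, so that $C(Y)\setminus\{o\}$ carries a K\"ahler structure. A Bochner identity for the distance to the vertex forces $\operatorname{Ric}(\partial_r,\partial_r)=0$ on $C(Y)$, and combined with K\"ahlerness and the cohomological rigidity of the transverse K\"ahler class this makes $C(Y)$ Ricci--flat on its regular part; thus every tangent cone is a (possibly singular) Ricci--flat K\"ahler cone, realisable as a normal affine variety with an effective $\mathbb{C}^*$-action, as in the structure theory underlying \cite{[L2]}. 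For such a cone, being Q-Gorenstein is equivalent to the existence of some $m\ge 1$ and a nowhere--vanishing, $\mathbb{C}^*$-homogeneous holomorphic section of $mK$ over the regular part, which by Ricci--flatness can be taken parallel, hence of constant pointwise norm.

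For the first assertion I would show that the Q-Gorenstein property of a tangent cone is equivalent to an intrinsic property of $M$: that for some fixed $m$, $mK_M$ is holomorphically trivial outside a compact set via a section $s$ of at most polynomial growth. From a Q-Gorenstein tangent cone one transfers its homogeneous trivialising section to $M$ through the $C^{1,\alpha}$ convergence and holomorphic coordinates on annuli, using the $\mathbb{C}^*$-homogeneity to patch consistently across dyadic scales and a H\"ormander $\bar\partial$-estimate (using $\operatorname{Ric}\ge 0$) to fill in; conversely, such an $s$ is recognised by every tangent cone, which is then Q-Gorenstein as well. Granting $s$, set $f=\log|s|^2_h$ with $h$ the metric on $mK_M$ induced by $g$, so $\operatorname{Ric}=\rho=\tfrac1m\,i\partial\bar\partial f$ near infinity, whence $\rho^k\wedge\omega^{n-k}=\tfrac1m\,d\!\big(i\bar\partial f\wedge\rho^{k-1}\wedge\omega^{n-k}\big)$ there; Stokes on $B(p,r)$ gives
\[
\int_{B(p,r)}\operatorname{Ric}^k\wedge\omega^{n-k}=C_0+\frac1m\int_{\partial B(p,r)} i\bar\partial f\wedge\rho^{k-1}\wedge\omega^{n-k},
\]
with $C_0$ independent of $r$. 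Quadratic curvature decay gives $|\rho|_g=O(r^{-2})$ and the $C^{1,\alpha}$ convergence on annuli (with $s$ of polynomial growth) gives $|\nabla f|_g=O(r^{-1})$, so the boundary integral is $O(r^{2n-2k})$, and for $k=n$ it is $O(1)$; since $\int_{B(p,r)}\operatorname{Ric}^n$ is nondecreasing, this already yields a finite limit when $k=n$. For $k<n$ one divides by $r^{2n-2k}$: along any sequence $r_i\to\infty$ realising a tangent cone, the normalised boundary term converges (by the convergence on annuli and the homogeneity of $s$) to a link integral of that cone, and it remains to see this value does not depend on which tangent cone is chosen.

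I expect this last point — upgrading subsequential convergence to genuine convergence, i.e.\ coping with possible non-uniqueness of the tangent cone at infinity — to be the main obstacle, together with the coherent patching of $s$ across all scales in the previous step. This is precisely the ``uniqueness of the scalar curvature integral at infinity'' ($k=1$), and the role of the Q-Gorenstein hypothesis is to force the limiting invariant to be quantised: up to the fixed normalisation it is a rational intersection number attached to the $mK$-trivialisation, so, the set of tangent cones being connected and the invariant varying continuously along it, it is locally constant, hence constant. A secondary, more technical, difficulty is securing the regularity (at least $C^{1,\alpha}$, via Anderson-type estimates and complex Monge--Amp\`ere regularity relative to the Ricci--flat cone model furnished above) needed to make the Stokes argument and the identification of the limits of $s$ rigorous.
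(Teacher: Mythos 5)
There are two genuine gaps in your proposal, and they sit at its foundation. First, your claim that every tangent cone is Ricci-flat on its regular part is false: for a K\"ahler cone the radial and Reeb directions are automatically Ricci-null, but nonnegative Ricci on the cone only forces the transverse Ricci form to satisfy $\rho^T\geq 2n\,\omega^T$, not equality (e.g.\ weighted K\"ahler cone structures on $\mathbb{C}^2$ with weights $d_1\neq d_2$ have $\int_{B(o,1)}\operatorname{Ric}\wedge\omega=C(d_1+d_2-2)\mathrm{Vol}\neq 0$, and these are exactly the cones appearing in Corollary \ref{cor3}). If the cones were Ricci-flat, Proposition \ref{prop1} together with Corollary $5$ of \cite{[L2]} would force every such $M$ to be Ricci-flat, trivializing the theorem. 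Second, and more seriously, your entire Stokes argument presupposes a nowhere-vanishing polynomial-growth section $s$ of $mK_M$ outside a compact set of $M$, obtained by transferring the cone's trivialization and correcting with H\"ormander's estimate. The paper explicitly identifies this as the obstruction that does \emph{not} go away: the $\bar\partial$-corrected section in general acquires a zero divisor, so one cannot write $\operatorname{Ric}=\frac{1}{m}i\partial\bar\partial\log|s|^2$ near infinity, and one does not get $\int_M\operatorname{Ric}^n=\int_V\operatorname{Ric}_V^n$ on the nose. The actual proof replaces your exact boundary identity by Lemma \ref{lm1}: an $n$-fold Poincar\'e--Lelong induction in which each step introduces a divisor, the final intersection numbers are integers, and the identity only holds modulo $(2\pi)^n\mathbb{Z}$; this quantized identity, plus continuity of the degree of $\Omega$ along the connected space of tangent cones, is what makes $\int_{B(o,1)}(dd^c\log|\Omega|^2)^n$ cone-independent.

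You are also missing the mechanism that handles $0<k<n$. Your quantization heuristic could at best control the top power $k=n$; the intermediate integrals $\int_{B(o,1)}(dd^c\log|\Omega|^2)^k\wedge\omega_0^{n-k}$ are not themselves quantized. The paper's Lemma \ref{lm2} shows they form a geometric series $c^k\int_{B(o,1)}(dd^cr^2)^n$ in $k$, so constancy at $k=n$ (from Lemma \ref{lm1}) and at $k=0$ (volume convergence) forces constancy for all $k$, and Proposition \ref{prop1} then converts this into existence of the limits on $M$. Finally, your treatment of the first assertion (all cones Q-Gorenstein) skips the actual argument: one must pass to the universal cover of the cross-section (finite fundamental group via the $RCD^*$ structure of the link), extract an $m$-th root of $\Omega$ there, perturb on nearby cones $V_i'\setminus\{o_i\}$ by H\"ormander, and descend by taking the product over the deck group, with closedness coming from the uniform degree bound of Claim \ref{cl1}; none of this is replaced by your ``recognised by every tangent cone'' step, which again presumes the global section on $M$ that is unavailable.
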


\begin{cor}\label{cor1}
Let $M$ be as above. If $rank(Ric)\leq n - 2$ everywhere, then $M$ is Ricci flat.
\end{cor}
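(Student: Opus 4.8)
The plan is to show that the Ricci form $\rho$ (also written $Ric$) vanishes identically; since $\rho\ge 0$, this is equivalent to the vanishing of the scalar curvature $S$, hence to $\int_{B(p,r)}Ric\wedge\omega^{n-1}=0$ for all $r>0$. I begin with the pointwise observation that $\operatorname{rank}(Ric)\le n-2$ says exactly that the nonnegative $(1,1)$-form $\rho$ has at least two zero eigenvalues everywhere, so $\rho^{\,n-1}\equiv 0$, and therefore $Ric^{k}\wedge\omega^{n-k}\equiv 0$ for every $k\ge n-1$; the limits in Theorem~\ref{thm1} for those $k$ are thus trivially zero, and the content to be extracted is the $k=1$ limit. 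Next I invoke Theorem~\ref{thm1} (our hypotheses being exactly those of that theorem): every tangent cone at infinity is $Q$-Gorenstein and $L_k:=\lim_{r\to\infty}r^{2k-2n}\int_{B(p,r)}Ric^{k}\wedge\omega^{n-k}$ exists and is finite. By the structure theory a $Q$-Gorenstein tangent cone here is a Ricci-flat K\"ahler cone $C(Y)$; consequently $c_1(M)$ is represented by a closed real $(1,1)$-form supported in a fixed compact set (equivalently, $K_M$ becomes holomorphically trivial away from a compact set), and outside that set one may write $\rho=i\partial\bar\partial u$ with $u$ plurisubharmonic and, because $\Delta u=O(r^{-2})$ (it is a multiple of $S$) and $M$ has Euclidean volume growth, $u$ grows at most logarithmically.

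The heart of the proof is then to deduce $L_1=0$ from $\rho^{\,n-1}\equiv 0$. I would rewrite $r^{2-2n}\int_{B(p,r)}\rho\wedge\omega^{n-1}$, using Stokes on $B(p,r)$ and the trivialization of $K_M$ near infinity, as a fixed constant — the compactly supported part of $c_1(M)$ paired with $[\omega]^{n-1}$ — plus a boundary flux over $\partial B(p,r)$ assembled from $u$, $\rho$ and its powers. The constant is killed by the prefactor $r^{2-2n}\to 0$; the flux is controlled by $\rho$ and $\rho\wedge(\cdots)$ restricted to $\partial B(p,r)$, so the identity $\rho^{\,n-1}\equiv 0$ (with the logarithmic growth of $u$ and quadratic curvature decay handling the lower-order terms) forces it to zero. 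Hence $L_1=0$.

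To conclude I would use the monotonicity that accompanies Theorem~\ref{thm1}: the limit $L_1$ is approached monotonically, i.e.\ $r\mapsto r^{2-2n}\int_{B(p,r)}Ric\wedge\omega^{n-1}$ is nondecreasing (the K\"ahler counterpart of Colding's volume monotonicity under $Ric\ge 0$). This function is everywhere $\ge 0$ and tends to $L_1=0$, so it vanishes identically; thus $\int_{B(p,r)}Ric\wedge\omega^{n-1}=0$ for all $r$, i.e.\ $S\equiv 0$, and since $Ric\ge 0$ this gives $Ric\equiv 0$.

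The step I expect to be the main obstacle is the passage from the pointwise algebraic identity $\rho^{\,n-1}\equiv 0$ to the global vanishing $L_1=0$: here one must exploit the Ricci-flat, canonically trivial structure of the tangent cone and the degenerate complex Hessian equation $(i\partial\bar\partial u)^{n-1}=0$ holding near infinity to control, and then rule out, any ``exceptional'' contribution to $c_1(M)$. That the rank hypothesis is genuinely needed is clear, since there are non-Ricci-flat K\"ahler metrics satisfying all of the remaining hypotheses.
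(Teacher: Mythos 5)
Your opening observation --- that $\operatorname{rank}(Ric)\le n-2$ forces $Ric^{\,n-1}\equiv 0$ pointwise, so the $k\ge n-1$ integrals vanish and the whole problem is to get the $k=1$ limit to vanish --- is exactly where the paper starts, and you have correctly located the hard step. But the route you propose for that step does not work, and it is not the paper's route. You assert that a $Q$-Gorenstein tangent cone ``here is a Ricci-flat K\"ahler cone,'' and deduce that $K_M$ is holomorphically trivial near infinity with $c_1(M)$ compactly supported. Neither claim is justified: $Q$-Gorenstein is a statement about the canonical sheaf, not about the metric, and the tangent cones in this setting carry a genuinely nontrivial Ricci current $\tfrac1m dd^c\log|\Omega|^2$ (if they were Ricci-flat, Theorem \ref{thm1} and its corollaries would be largely vacuous). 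With that gone, your Stokes/flux argument has no mechanism: the boundary flux of $\rho\wedge\omega^{n-1}$ over $\partial B(p,r)$ is simply not controlled by the vanishing of $\rho^{n-1}$, and a degenerate Monge--Amp\`ere equation $(i\partial\bar\partial u)^{n-1}=0$ near infinity does not by itself kill a first-power integral. Your closing monotonicity claim is also asserted rather than established; the paper delegates that final implication to Corollary 5 of \cite{[L2]}.

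The paper's actual bridge from $Ric^{\,n-1}\equiv 0$ to the $k=1$ limit is the pair Proposition \ref{prop1} and Lemma \ref{lm2}. Proposition \ref{prop1}, applied with $k=n-1<n$, transfers the pointwise vanishing to the tangent cone: $\int_{B_V(o,1)}Ric_V^{\,n-1}\wedge\omega_V=0$. Lemma \ref{lm2} then says that $k\mapsto\int_{B_V(o,1)}(dd^c\log|\Omega|^2)^k\wedge\omega_0^{\,n-k}$ is a geometric series, $c^k\cdot\mathrm{vol}(B_V(o,1))$; the vanishing of the $(n-1)$-st term (with $n-1\ge 1$) forces the ratio $c=0$, hence the $k=1$ term --- the scalar curvature integral on the cone --- vanishes, and Corollary 5 of \cite{[L2]} concludes. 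This algebraic rigidity of the cone integrals is the idea your proposal is missing; without it (or some substitute), the passage from $L_{n-1}=0$ to $L_1=0$ remains open, so the proof as proposed has a genuine gap.
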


\begin{cor}\label{cor2}
Let $M$ be as above. If $rank(Ric)\leq n - 1$ everywhere and one tangent cone is Gorenstein, then $M$ is Ricci flat. 
\end{cor}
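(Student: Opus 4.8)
The first step is to invoke Theorem \ref{thm1}: a Gorenstein tangent cone is in particular Q-Gorenstein, so all tangent cones are Q-Gorenstein and the limits $L_k=\lim_{r\to\infty}r^{2k-2n}\int_{B(p,r)}Ric^k\wedge\omega^{n-k}$ exist for all $k>0$. Note $L_n=\int_M Ric^n$, but this is identically zero under our hypothesis, since $rank(Ric)\le n-1$ forces $\det(Ric)\equiv 0$ and hence $Ric^n\equiv 0$ pointwise; so the existence of the limits alone is not enough, and the rank condition must be used structurally. The strategy I would pursue is to prove $Ric\equiv 0$ by exhibiting the Ricci form as $i\partial\bar\partial$ of a global plurisubharmonic function and then running a Liouville argument in which the rank hypothesis is exactly what is needed. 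This is where the honest Gorenstein assumption (rather than merely Q-Gorenstein, as in Theorem \ref{thm1}) should enter.

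Step one of that strategy: upgrade ``Q-Gorenstein'' to the statement that $K_M$ is holomorphically trivial, equivalently that $Ric$ is $i\partial\bar\partial$-exact. Since one tangent cone $C(Y)$ is a Gorenstein affine cone it carries a nowhere-vanishing holomorphic $n$-form; using the asymptotically conical structure furnished by quadratic curvature decay together with Euclidean volume growth, this can be transplanted to a nowhere-zero holomorphic $n$-form near infinity on $M$. One then extends it over the compact part: the obstruction lives in $c_1(M)$, and for complete K\"ahler manifolds with nonnegative Ricci and Euclidean volume growth the relevant Stein-type/affine structure (cf.\ \cite{[L2]}), together with the integrality of discrepancies that the Gorenstein (rather than Q-Gorenstein) hypothesis provides, should force this class to vanish. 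Let $\Omega$ be the resulting nowhere-zero holomorphic $n$-form and set $u:=\log\|\Omega\|_\omega^2$, so that $Ric=i\partial\bar\partial u$ globally; since $Ric\ge 0$, $u$ is plurisubharmonic.

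Step two: the rank hypothesis now reads $(i\partial\bar\partial u)^n\equiv 0$, i.e.\ $u$ solves the homogeneous complex Monge--Amp\`ere equation on all of $M$. Quadratic curvature decay gives $|i\partial\bar\partial u|=|Ric|\le C\,d(p,x)^{-2}$, whence, comparing $u$ with $\log d(p,x)$ on the Euclidean-volume-growth, nonnegatively-Ricci-curved manifold $M$, the function $u$ grows at most logarithmically, and the Cheng--Yau gradient estimate gives $|\nabla u|(x)=O\big(d(p,x)^{-1}\log d(p,x)\big)$. On the open dense set where $i\partial\bar\partial u$ has locally constant rank, its kernel is an integrable holomorphic distribution whose leaves are complex submanifolds $\Sigma$ on which $u$ is pluriharmonic. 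Using completeness and the asymptotically conical geometry one argues that the generic leaf is complete and conical near infinity --- this is exactly where quadratic curvature decay is needed, to prevent leaves from spiralling off --- so that $u|_\Sigma$ is a pluriharmonic function of sublinear growth on such a $\Sigma$, hence constant. Thus $u$ is constant along the foliation; since the locus where the rank of $i\partial\bar\partial u$ drops further is small and $u$ is continuous, $u$ is constant on $M$, i.e.\ $Ric\equiv 0$.

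The step I expect to be the genuine obstacle is the last one: controlling the Monge--Amp\`ere foliation globally --- proving completeness of the relevant leaves and propagating constancy across the rank-drop locus --- together with, to a lesser extent, the triviality of $K_M$, i.e.\ ruling out nonzero discrepancies, which is precisely what Gorenstein buys over Q-Gorenstein. An alternative route that sidesteps the foliation: if $u$ were non-constant it would be unbounded above by the Liouville theorem for plurisubharmonic (hence subharmonic) functions on $M$, and since $dV_\omega=c\,e^{-u}\,i^{n^2}\Omega\wedge\bar\Omega$ while $Ric=i\partial\bar\partial u$ has a nontrivial kernel direction at every point, $e^{-u}$ would have to degenerate in that flat direction, forcing $\operatorname{Vol}(B(p,r))=o(r^{2n})$ and contradicting Euclidean volume growth; here the crux is to quantify the degeneration, turning ``$rank(Ric)\le n-1$ and $u\to\infty$'' into an honest volume collapse.
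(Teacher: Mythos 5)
Your opening observation is correct and matches the paper's starting point: $rank(Ric)\le n-1$ forces $Ric^n\equiv 0$ pointwise, so $\int_M Ric^n=0$, and the issue is how to exploit this. But from there you abandon the paper's machinery for a Monge--Amp\`ere foliation strategy, and the two steps you yourself flag as obstacles are genuine gaps, not technicalities. First, the triviality of $K_M$: a nowhere-vanishing holomorphic $n$-form near infinity does not extend to a nowhere-vanishing form on all of $M$; the obstruction is a canonical divisor supported on the compact core (think of a resolution of a Gorenstein cone singularity that is not crepant, where $K$ is a nontrivial effective exceptional divisor), and neither nonnegative Ricci curvature nor ``integrality of discrepancies'' is an argument that this divisor vanishes --- indeed ruling it out is essentially equivalent to the conclusion you are trying to prove. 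Second, even granting $Ric=dd^c u$ with $u$ plurisubharmonic and $(dd^c u)^n\equiv 0$, the Liouville step fails as stated: a subharmonic function of logarithmic growth need not be constant (cf.\ $\log|z|$), and the foliation argument --- completeness of leaves, constancy of $u$ along them, propagation across the rank-drop locus --- is precisely the hard analysis, for which you offer no proof. The ``alternative route'' via volume collapse is likewise only a heuristic.

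The paper's actual proof stays entirely inside the integral framework of Theorem \ref{thm1} and never needs a global potential for $Ric$ on $M$. The Gorenstein hypothesis is used only to produce a genuine nowhere-vanishing \emph{canonical} (not pluricanonical) section on geodesic balls of the rescaled manifolds close to the tangent cone; rerunning the argument of Lemma \ref{lm1} with such a section removes the ``modulo $(2\pi)^n\mathbb{Z}$'' ambiguity and yields the exact limit identity $\lim_i\int_{B(p_i,1)}Ric_i^n=\int_{B_V(o,1)}Ric_V^n$, hence $\int_{B_V(o,1)}Ric_V^n=0$. Lemma \ref{lm2} then says the quantities $\int_{B_V(o,1)}Ric_V^k\wedge\omega_0^{n-k}$ form a geometric series in $k$ with nonzero $k=0$ term (the volume), so the vanishing at $k=n$ forces vanishing for all $k>0$; taking $k=1$ kills the asymptotic scalar curvature integral, and Corollary $5$ of \cite{[L2]} concludes that $M$ is Ricci flat, exactly as in the proof of Corollary \ref{cor1}. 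You should redirect your proof along these lines: the rank hypothesis enters only through $\int Ric^n=0$, and the Gorenstein hypothesis only through the removal of the integer ambiguity in Lemma \ref{lm1}.
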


Let $(V, o)$ be a tangent cone of $M$ at infinity, let $r$ be the distance function to $o$. By Cheeger-Colding \cite{[CC1]} and that the curvature has quadratic decay condition,  $V$ is a K\"ahler cone with $C^{1, \alpha}$ metric. The reeb vector field $Jr\frac{\partial}{\partial r}$ defines a isometric and holomorphic action. Then one can decompose the holomorphic functions on $V$ into homogeneous pieces. Let the holomorphic spectrum be the set of $d$ such that $df = r\frac{\partial}{\partial r}f$ for some holomorphic function $f$ on $V$, not identically zero.

\begin{cor}\label{cor3}
Let $M^2$ be a complete noncompact K\"ahler surface with nonnegative Ricci curvature, Euclidean volume growth and quadratic curvature decay. Then all tangent cones have the same holomorphic spectrum. In particular, $M$ is biholomorphic to resolution of an affine algebraic variety. Moreover,  $\lim\limits_{r\to\infty}r^{-2}\int_{B(p, r)}S$ exists. 
\end{cor}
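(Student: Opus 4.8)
The plan is to deduce the three assertions from Theorem~\ref{thm1}, the one new point being that on a K\"ahler \emph{surface} the $\mathbb Q$-Gorenstein hypothesis of that theorem is automatic. So I would first fix a tangent cone $(V,o)$ of $M^2$ at infinity and check that it is $\mathbb Q$-Gorenstein. As recalled just before Corollary~\ref{cor3}, $V$ is a two-dimensional K\"ahler cone $C(Y)$ with $C^{1,\alpha}$ metric, smooth away from $o$, and by the structure theory of non-collapsed K\"ahler tangent cones it is a normal affine variety with at worst an isolated singularity at $o$. Since $Ric_M\ge 0$ passes to the cone, the link $Y$ satisfies $Ric_Y\ge 2g_Y>0$, so by Myers's theorem the local fundamental group $\pi_1(Y)$ is finite; a normal surface singularity whose link has finite fundamental group is a quotient singularity $\mathbb C^2/\Gamma$, which is log terminal and hence $\mathbb Q$-Gorenstein. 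Therefore some (equivalently, by Theorem~\ref{thm1}, every) tangent cone of $M^2$ is $\mathbb Q$-Gorenstein, and $\lim_{r\to\infty}r^{2k-4}\int_{B(p,r)}Ric^k\wedge\omega^{2-k}$ exists for every $k>0$; specializing to $k=1$, and using that on a K\"ahler surface $Ric\wedge\omega$ is a fixed positive multiple of $S\,dV_g$, gives the existence of $\lim_{r\to\infty}r^{-2}\int_{B(p,r)}S$.

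Next I would show that the holomorphic spectrum is an invariant of $M$ rather than of the chosen cone. Put $N(d):=\dim\{f\in\mathcal O(M):|f|\le C(1+r)^d\ \text{for some }C\}$, which depends only on $M$. Using the sharp three-circle and H\"ormander $L^2$ estimates available for manifolds of this type, the ``leading term at infinity'' map carries a polynomial-growth holomorphic function on $M$ to a nonzero holomorphic function on any tangent cone $V$ without raising the degree, and one obtains the exact equality of $N(d)$ with the dimension of the space of holomorphic functions $f$ on $V$ satisfying $r\frac{\partial}{\partial r}f=d'f$ for some $0\le d'\le d$, for every $d\ge0$ and every $V$. Hence the holomorphic spectrum of $V$ --- the jump set of the nondecreasing step function $d\mapsto N(d)$, with jump sizes as multiplicities --- is the same for all tangent cones. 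Moreover the growth filtration on the ring $\mathcal R:=\bigcup_d\{f\in\mathcal O(M):|f|\le C(1+r)^d\}$ then has associated graded ring the coordinate ring $\bigoplus_d\mathcal O_d(V)$ of the affine cone $V$, which is finitely generated; so $\mathcal R$ is finitely generated, $X:=\operatorname{Spec}\mathcal R$ is a normal affine surface, and the tautological morphism $M\to X$ is proper and birational with $M$ smooth, i.e.\ a resolution of an affine algebraic variety.

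I expect the main obstacle to be precisely this matching of $N(d)$ with the dimension of the space of holomorphic functions of degree $\le d$ on $V$: the inequality obtained by taking leading terms is formal, whereas the reverse --- lifting \emph{every} homogeneous holomorphic function on the singular, merely $C^{1,\alpha}$ cone back to a holomorphic function on $M$ with no loss of polynomial growth, and then establishing the properness and birationality of $M\to X$ --- is where the sharp gradient/three-circle estimates and the $L^2$-extension arguments must carry the weight. The $\mathbb Q$-Gorenstein verification of the first step is the other nontrivial ingredient, although in complex dimension two it is comparatively soft once the tangent cone is known to be a quotient singularity.
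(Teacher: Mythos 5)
Your first step (the tangent cone of a surface is a quotient singularity, hence $\mathbb Q$-Gorenstein, so Theorem \ref{thm1} applies and $k=1$ gives the existence of $\lim_{r\to\infty}r^{-2}\int_{B(p,r)}S$) is essentially the paper's argument: the paper gets finiteness of the fundamental group of the cross section from \cite{[Ke]} rather than a Myers-type argument on the merely $C^{1,\alpha}$ link, and then invokes Mumford's criterion to identify the universal cover $W$ of $V$ with $\mathbb C^2$, but the content is the same.

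The second step, however, contains a genuine gap, and it is exactly the gap you flag yourself. Your route to the rigidity of the holomorphic spectrum is to match $N(d)$, the dimension of polynomial-growth holomorphic functions on $M$ of degree at most $d$, with the dimension of the degree-$\le d$ part of $\mathcal O(V)$ for every tangent cone $V$. That matching is only known when the bisectional curvature is nonnegative (via the three-circle theorem of \cite{[L1]}) or when the metric is Ricci-flat (\cite{[DS]}); under only $Ric\ge 0$ the three-circle monotonicity fails and no such degree-preserving correspondence is available --- indeed the paper's introductory remark stresses that the rigidity of the spectrum is ``unexpected'' precisely because tangent cones of spaces with lower Ricci bounds can be flexible \cite{[CN]}. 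The paper's actual mechanism is entirely different and does not count holomorphic functions on $M$ at all: writing $W\cong\mathbb C^2$ with homogeneous coordinates of Reeb weights $d_1,d_2$, one has $vol(B_W(o_W,1))=\omega_4/(d_1d_2)$ and $\int_{B(o_W,1)}Ric_W\wedge\omega_W=C(d_1+d_2-2)\,vol(B_W(o_W,1))$; Bishop--Gromov volume comparison makes the first quantity independent of the tangent cone, and Theorem \ref{thm1} (the new analytic input, via Lemmas \ref{lm1} and \ref{lm2}) makes the second independent as well, so $d_1d_2$ and $d_1+d_2$, hence $d_1,d_2$, hence the spectrum of $G$-invariant polynomials, are the same for all cones. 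Without this (or some substitute for the missing three-circle theorem), your argument for the constancy of the spectrum, and consequently your construction of the affine variety as $\operatorname{Spec}$ of the ring of polynomial-growth functions, does not go through; the paper instead deduces the algebraicity by citing Theorem 1.2 of \cite{[L3]} once the spectrum rigidity is in hand.
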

\begin{remark}
The algebraicity was proved in \cite{[Mok1]}\cite{[L3]}\cite{[CH2]}, where the Ricci curvature is assumed to be strictly positive or zero. In our case, Ricci is only assumed to be nonnegative. The new thing here is to show that tangent cones have the same holomorphic spectra, this was known before when the metric is Ricci flat \cite{[DS]} or the bisectional curvature is nonnegative \cite{[L1]}. The rigidity of holomorphic spectra seems unexpected, since it is known from Riemannian geometry that such thing can be rather flexible \cite{[CN]}.

\end{remark}

\begin{question}
Is it true that Corollary \ref{cor3} holds for higher dimensions?
\end{question}

While the general strategy follows from \cite{[L2]},  where $\int Ric^n$ plays a key role in connecting different tangent cones, there are extra difficulties, e.g., $Q$-Gorenstein condition does not ensure that one can pull back the nowhere vanishing holomorphic section to the manifold (nearby tangent cones). In particular, one might not have $\int_M Ric^n =  \int Ric_V^n$ in general. The key point is to show that $\int_M Ric^n = \int Ric_V^n$ modulo $c(2\pi)^n$, where $c$ is a rational number. Since the space of tangent cones is path connected, $\int_V Ric_V^n$ is independent of $V$. Another observation is that $\int_{B_V(o,1)}  Ric_V^k\wedge \omega_V^{n-k}$ is a geometric series. This gives the uniqueness of scalar curvature integral on space of tangent cones.

\medskip
\medskip

\begin{center}
\bf  {\quad Acknowledgments}
\end{center}
The author thanks Professor Chenyang Xu for some helpful discussion.

\section{The proofs}
\begin{proof}

Let $V$ be a tangent cone at infinity which is $Q$-Gorenstein. Since we assume quadratic curvature decay, the cross section has $C^{1, \alpha}$ regularity. Also, by 
\cite{[Ke]}, the cross section is a $RCD^*(2, 3)$ space. In particular, the fundamental group $G$ is finite.
Let $\Sigma$ be the universal cover of the cross section and let $V'$ be the cone over $\Sigma$. Then by analytic realization theorem of Stein on page 671 of \cite{[several]}, $V'$ can be realized as a normal complex analytic space. 
By $Q$-Gorenstein condition, we may assume $\Omega$ is an $m$-canonical section of $V$ which is nowhere vanishing. As in \cite{[MSY]} and Lemma $6.1$ in \cite{[CS]}, by multiplying by a nowhere vanishing holomorphic function, we may assume $\Omega$ is homogeneous.
Say $\log |\Omega|^2$ has degree $d$ with respect to $r\frac{\partial}{\partial r}$. Recall Lemma $5$ in \cite{[L2]}:
\begin{prop}\label{prop1}
Let $M$ be a complete noncompact K\"ahler manifold with nonnegative Ricci curvature, Euclidean volume growth and quadratic curvature decay. Then there exists $C_1>0$ such that for all $1\leq k\leq n$, for all $r>0$, 
$$r^{2k-2n}\int_{B(p, r)}Ric^k\wedge \omega^{n-k}<C_1.$$ Furthermore, if we consider a tangent cone $(V, o)$ and a sequence of rescaled manifolds $(M_i, p_i) = (M, p, \frac{r}{r_i})\to (V, o)$ in the pointed Gromov-Hausdorff sense, then for $k<n$, $$\lim\limits_{i\to\infty}r_i^{2k-2n}\int_{B(p, r_i)}Ric^k\wedge \omega^{n-k}=\int_{B_V(o, 1)}Ric_V^k\wedge \omega_V^{n-k},$$ where $Ric_V$ is understood in the current sense.
\end{prop}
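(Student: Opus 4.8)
The plan is to establish the scale-invariant upper bound first and then feed it into the convergence statement; the bound itself splits into the easy range $1\le k\le n-1$ and the delicate top degree $k=n$. For $1\le k\le n-1$: quadratic curvature decay gives $|Rm|(x)\le C\,d(p,x)^{-2}$ for $d(p,x)\ge 1$ plus a uniform bound on $B(p,1)$, and since $Ric\ge 0$ the eigenvalues of $Ric$ relative to $\omega$ lie in $[0,C\,d(p,\cdot)^{-2}]$ away from $B(p,1)$, so there $Ric^k\wedge\omega^{n-k}\le C\,d(p,\cdot)^{-2k}\omega^n$. Splitting $B(p,r)\setminus B(p,1)$ into dyadic annuli and using $\mathrm{Vol}(B(p,s))\le C s^{2n}$ (Euclidean volume growth with Bishop--Gromov), the integral $\int_{B(p,r)}d(p,\cdot)^{-2k}\omega^n$ is a geometric sum of ratio $2^{2n-2k}>1$, hence at most $C+C r^{2n-2k}$; multiplying by $r^{2k-2n}$ gives the uniform constant.

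The case $k=n$ is the \textbf{main obstacle}: the pointwise estimate alone only yields $O(\log r)$, since $\int_{B(p,r)}d(p,\cdot)^{-2n}\omega^n\sim\log r$, so one must use that $Ric$ is a closed $(1,1)$-form, the curvature of $K_M^{-1}$. I would solve the Poincar\'e--Lelong equation $i\partial\bar\partial u=Ric$ on $M$ with $|u|\le C\log(d(p,\cdot)+2)$; solvability and this bound come from $Ric\ge 0$, the curvature decay and Euclidean volume growth via the Green's function representation (Mok--Siu--Yau, Ni--Tam). Then $u$ is plurisubharmonic and $\int_{B(p,r)}Ric^n=\int_{B(p,r)}(i\partial\bar\partial u)^n$; by Stokes this reduces to a boundary integral over $\partial B(p,r)$ of a primitive built from $u$ and $Ric^{n-1}$. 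To keep this bounded and not merely $O(\log r)$ I would combine $\mathrm{Area}(\partial B(p,r))\le C r^{2n-1}$ (Bishop--Gromov) and $|Ric^{n-1}|\le C r^{-2n+2}$ with the sharpened estimate $|\nabla u|\le C/d(p,\cdot)$. This last estimate is where the geometry is genuinely used: $u$ is asymptotic to $a\log d(p,\cdot)$ plus a bounded term with $a$ uniformly controlled, because the radial complex direction lies in the kernel of the Ricci form of the smooth locus of every tangent cone; this pins the logarithmic coefficient and forces the oscillation of $u$ on balls of radius $\sim d(p,\cdot)$ to be uniformly bounded.

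For the convergence ($k<n$): since $Ric$ is invariant under scaling of the metric while $\omega$ scales by its square, rescaling by $1/r_i$ turns the left-hand side into $\int_{B_i(p_i,1)}Ric_{g_i}^k\wedge\omega_{g_i}^{n-k}$, and I must show this tends to $\int_{B_V(o,1)}Ric_V^k\wedge\omega_V^{n-k}$. By quadratic curvature decay the rescaled curvatures are uniformly bounded on each annulus $B_i(p_i,2)\setminus B_i(p_i,\epsilon)$ once $i$ is large, so by non-collapsing the metrics (and complex structures) converge in $C^{1,\alpha}_{\mathrm{loc}}(V\setminus\{o\})$. On such an annulus, writing $Ric_{g_i}=i\partial\bar\partial\phi_i$ in local holomorphic coordinates with $\phi_i=-\log\det g_i$ plurisubharmonic, the uniform convergence $\phi_i\to\phi_V$ and Bedford--Taylor continuity of mixed Monge--Amp\`ere currents give $Ric_{g_i}^k\to Ric_V^k$ weakly, and pairing with the $C^0$-convergent forms $\omega_{g_i}^{n-k}$ is legitimate because the first part bounds the masses of $Ric_{g_i}^k$ uniformly. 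Finally the contributions from neighborhoods of $o$ are uniformly small: the pointwise estimate gives $\int_{B_i(p_i,\epsilon)}Ric_{g_i}^k\wedge\omega_{g_i}^{n-k}\le C\epsilon^{2n-2k}$ and the same on $V$ — it is precisely here that $k<n$ enters — so letting $i\to\infty$ and then $\epsilon\to0$ concludes.
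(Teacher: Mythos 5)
First, note that the paper does not actually prove this Proposition: it is recalled verbatim as Lemma 5 of \cite{[L2]}, so the only in-paper ``proof'' is a citation. Judged on its own terms, your treatment of the range $1\le k\le n-1$ (pointwise bound $Ric^k\wedge\omega^{n-k}\le C\,d(p,\cdot)^{-2k}\omega^n$ plus dyadic summation against Euclidean volume growth) and of the convergence statement for $k<n$ ($C^{1,\alpha}$ convergence away from the vertex, Bedford--Taylor continuity applied to the locally uniformly convergent psh potentials $-\log\det g_i$, and the $\epsilon^{2n-2k}$ mass bound near the vertex, which is exactly where $k<n$ enters) is sound and is what one expects.

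The genuine gap is precisely where you flagged the main obstacle, $k=n$. Two problems. (i) Solvability of $dd^cu=Ric$ with $u=O(\log d)$ is attributed to Mok--Siu--Yau and Ni--Tam, but those theorems assume nonnegative holomorphic \emph{bisectional} curvature: the Green's function representation only yields $\Delta u=S$, and upgrading this trace identity to the full $(1,1)$-equation uses a Bochner/maximum-principle step that is not available under $Ric\ge 0$ alone. (ii) Even granting such a $u$, the bound $|u|\le C\log(d+2)$ only controls the oscillation on $B(x,d(x)/2)$ by $O(\log d)$, so the Cheng--Yau-type estimate gives $|\nabla u|\le C\log d/d$ and your boundary integral is $O(\log r)$ --- no better than the naive count. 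The sharpening to $|\nabla u|\le C/d$ needs $u=a\log d+O(1)$ with a uniformly pinned coefficient $a$; but $a$ is, up to a dimensional constant, $\lim_{r\to\infty} r^{2-2n}\int_{B(p,r)}S$, and the existence of that limit is one of the main \emph{new} results of this very paper (Theorem \ref{thm1}, Corollary \ref{cor3}), established only under additional hypotheses --- so this step is circular, and the heuristic about the radial direction lying in the kernel of $Ric_V$ cannot substitute for it since $Ric_V$ is only a current on a $C^{1,\alpha}$ cone. The route actually used in \cite{[L2]} (and visible in the proof of Lemma \ref{lm1} here) avoids a global potential altogether: contract exceptional subvarieties to obtain strongly pseudoconvex domains, produce holomorphic $n$-forms $s$ by H\"ormander's $L^2$ method, write $Ric=dd^c\log|s|^2-2\pi[D]$ by Poincar\'e--Lelong, integrate by parts against a cutoff, and handle the divisor term inductively by restricting to $D$.
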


\begin{claim}\label{cl1}
$d/m<C(M)$.
\end{claim}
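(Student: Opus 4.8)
The plan is to turn the homogeneity of $\log|\Omega|^2$ into an integral identity for the scalar curvature of $V$ and then bound it using Proposition \ref{prop1} with $k=1$, which is exactly the input available. On the regular part $V^{\mathrm{reg}}$ let $h$ be the Hermitian metric on $K_V^{\otimes m}$ induced by $\omega_V$, and set $u:=\log|\Omega|^2$ (the norm taken with respect to $h$). Since $\Omega$ is a nowhere vanishing holomorphic section of $K_V^{\otimes m}$ — this is precisely where the $Q$-Gorenstein hypothesis enters — one has $i\partial\bar\partial u = m\, Ric_V\ge 0$ on $V^{\mathrm{reg}}$, the sign because $V$ is a tangent cone at infinity of a manifold with $Ric\ge 0$. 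Because the singular set $\mathcal S$ of $V$ has complex codimension $\ge 2$ and, by Proposition \ref{prop1}, $Ric_V$ is already a closed positive current of finite mass on $B_V(o,1)$ carrying no mass on $\mathcal S$, this identity persists across $\mathcal S$ in the sense of currents. By hypothesis $u$ has degree $d$ for $r\frac{\partial}{\partial r}$, i.e.\ $u = d\log r + v$ with $v$ pulled back from the link $L$ (independent of $r$, and Reeb invariant).

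Next I would compute $I:=\int_{B_V(o,1)} Ric_V\wedge\omega_V^{n-1}$ by Stokes. Write $u=\tfrac d2\log r^2 + v$, so $m\,Ric_V = \tfrac d2\, i\partial\bar\partial\log r^2 + i\partial\bar\partial v$. Applying Stokes on the annuli $\{\epsilon<r<1\}$ (with $\mathcal S$ removed by a shrinking tube, see below) and letting $\epsilon\to 0$, the integral of each term against $\omega_V^{n-1}$ becomes a flux through $\{r=1\}$ plus one through $\{r=\epsilon\}$. For $i\partial\bar\partial v$ both fluxes vanish since $v$ is independent of $r$. For $i\partial\bar\partial\log r^2$ the flux through $\{r=\epsilon\}$ is $O(\epsilon^{2n-2})\to 0$ (this uses $n\ge 2$), while the flux through $\{r=1\}$ equals $c_n\,\mathrm{vol}(L)$ for an explicit dimensional constant $c_n>0$, $\mathrm{vol}(L)$ being the Riemannian volume of the link $\partial B_V(o,1)$. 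Hence $m I = \tfrac d2 c_n\,\mathrm{vol}(L)$, i.e.\ $I=\dfrac{d\,c_n}{2m}\,\mathrm{vol}(L)$; in particular $d\ge 0$ since $I\ge 0$.

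To finish, note $\mathrm{vol}(L)=2n\,\mathrm{vol}(B_V(o,1))=2n\,\omega_{2n}\,\theta$, where $\theta:=\lim_{r\to\infty}\mathrm{vol}(B(p,r))/(\omega_{2n}r^{2n})>0$ is the asymptotic volume ratio of $M$ (well defined by Bishop--Gromov and the Euclidean volume growth; $\omega_{2n}$ is the volume of the unit Euclidean ball), the first equality being the cone structure together with volume convergence under the Gromov--Hausdorff limit. Thus $\mathrm{vol}(L)$ depends only on $M$. On the other hand Proposition \ref{prop1} with $k=1$ ($k<n$ as $n\ge 2$) gives $I=\lim_{i\to\infty}r_i^{2-2n}\int_{B(p,r_i)}Ric\wedge\omega^{n-1}\le C_1$. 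Combining, $\dfrac dm\le \dfrac{2C_1}{c_n\,\mathrm{vol}(L)}=:C(M)$, as claimed.

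The step that needs care is the Stokes argument across $\mathcal S$: one must check that the flux of $d\log|\Omega|^2$ through shrinking tubular neighborhoods of $\mathcal S$ tends to $0$. Near the vertex this is automatic from homogeneity; along $\mathcal S\setminus\{o\}$ it follows from a cutoff argument using that $\mathcal S$ has complex codimension $\ge 2$ together with the a priori finiteness of $\int_{B_V(o,1)}Ric_V\wedge\omega_V^{n-1}$ from Proposition \ref{prop1}, which controls the energy of $u$ on compact subsets of $V^{\mathrm{reg}}$ — in the spirit of the computations with currents in \cite{[L2]}; alternatively one may carry out the computation on a resolution of $V$. The rest is bookkeeping with dimensional constants.
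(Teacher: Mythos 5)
Your proposal is correct and follows essentially the same route as the paper: bound $\int_{B_V(o,1)}Ric_V\wedge\omega_V^{n-1}$ via Proposition \ref{prop1} with $k=1$, identify $m\,Ric_V$ with $dd^c\log|\Omega|^2$, and integrate by parts so that homogeneity of degree $d$ turns the integral into $d$ times a fixed multiple of the volume of the link. The extra care you take with the singular set and the decomposition $\log|\Omega|^2=\tfrac d2\log r^2+v$ only fleshes out the same computation the paper records in its displayed chain of equalities.
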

\begin{proof}
Assume $(M_i, p_i) = (M, p, \frac{r}{r_i}) \to (V, o)$ in the pointed Gromov-Hausdorff sense.
According to Proposition \ref{prop1} and proposition $2.15$ of \cite{[L3]}, 
\begin{equation}\begin{aligned}C(M)&\geq \lim\limits_{i\to\infty}\int_{B(p_i, 1)}S_i \\&= \frac{1}{m}\int_{B(o, 1)}dd^c\log |\Omega|^2\wedge\omega_V^{n-1} \\&= \frac{1}{m}\int_{B(o, 1)}\Delta\log |\Omega|^2\omega_V^{n}\\&=
\frac{1}{m}\int_{\partial B(o, 1)}\frac{\partial \log |\Omega|^2}{\partial r}ds \\&= \frac{1}{m}c(n)dvol(B(o, 1)).
\end{aligned}\end{equation} Hence the claim is proved.
\end{proof}

As the cross section of $V'$ is simply connected, we can take the $m$-th root of $\Omega$ to obtain a canonical form $\eta$ which is homogeneous and nowhere vanishing. Now consider a sequence of tangent cones $V_i$ converging to $V$ in both metric and complex analytic sense. By the metric regularity, the cross sections are all diffeomorphic to that of $V$. In particular, we can take the universal cover, say $(V'_i, o_i)\to (V', o)$.  One can pull back $\eta$ to the canonical bundle of $V'_i\backslash{o_i}$ by local diffeomorphism. According to Example 9.4 of \cite{[Sa]}, 
$V'_i\backslash{o_i}$ admits a complete K\"ahler metric. As on \cite{[L2]} page 15-16, by suitable cut-off of the pull back section,  we can apply H\"ormander $L^2$ estimate of $\overline\partial$ on $V'_i\backslash{0}$ to get a nowhere vanishing section $\eta_i$ on the canonical bundle.
One can take the product $\prod\limits_{g\in G} g\eta_i$ to pass to the tangent cone $V_i$. By multiplying by a nowhere vanishing holomorphic function, we can assume it is homogeneous of uniform bounded degree ensured by Claim \ref{cl1}. This proves the openness of $Q$-Gorenstein condition on space of tangent cones. We can also assume such thing has norm $1$ on the unit ball. By boundedness of degree, the norm on the ball of radius $2$ is uniformly bounded. By using the same estimate as on page 17 of \cite{[L2]},  we have interior gradient estimates. 
By Cheeger-Gromov convergence, we proved the closedness of $Q$-Gorenstein condition along the path of tangent cones.
From now on, we may assume that $\Omega$ is a $k$-pluricanonical section where $k$ is the order of the fundamental group of $V$. Consider $(M_i, p_i) = (M, p, \frac{r}{a_i})$, where $a_i\to\infty$. 
\begin{lemma}\label{lm1}
Assume $B(p_i, 2)\to B_V(o, 2)$ in the Gromov-Hausdorff sense. Then modulo $(2\pi)^n\mathbb{Z}$, we have $$k^n\lim\limits_{i\to\infty}\int_{B(p_i, 1)} Ric_i^n  = \int_{B(o, 1)}(dd^c\log |\Omega|^2)^n.$$ 
\end{lemma}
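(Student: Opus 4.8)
The plan is to relate the integral $\int_{B(p_i,1)}\mathrm{Ric}_i^n$ to the integral of $(dd^c\log|\Omega|^2)^n$ on the cone by exploiting that, on a Kähler manifold, $\mathrm{Ric} = dd^c\log\det g = -dd^c\log|s|^2$ for a local holomorphic section $s$ of the canonical bundle (or $-\frac1k dd^c\log|\Omega|^2$ for a $k$-pluricanonical $\Omega$ up to the factor $k$). So, up to the factor $k^n$ that appears because $\Omega$ is $k$-pluricanonical, we want $\lim_i \int_{B(p_i,1)} \mathrm{Ric}_i^n$ to agree with $\int_{B_V(o,1)}(dd^c\log|\Omega|^2)^n$ as currents — and the assertion is only that they agree modulo $(2\pi)^n\mathbb{Z}$, the discrepancy coming from the fact that, unlike the lower-degree integrals handled in Proposition \ref{prop1}, the top-degree Monge–Ampère mass need not pass cleanly to the limit: there is a possible loss of mass at the singular point $o$, and that lost mass must be quantized.

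First I would set up the comparison on the regular part. On $M_i$, choose on $B(p_i,2)$ a (multivalued, or better: work on the frame/pluricanonical bundle) holomorphic pluricanonical section and write $\mathrm{Ric}_i = -\frac1k dd^c \log|\Psi_i|^2$ where $|\Psi_i|^2$ is a globally defined smooth positive function (the pointwise norm of a $k$-pluricanonical section with respect to $g_i$); the point is that $k^n\mathrm{Ric}_i^n = (dd^c\log|\Psi_i|^2)^n$ with $\log|\Psi_i|^2$ a genuine function on $B(p_i,2)$, so there is no monodromy obstruction here. On $V$ the analogous function is $\log|\Omega|^2$, smooth on $V\setminus\{o\}$ but only $C^{1,\alpha}$ (the metric is $C^{1,\alpha}$), homogeneous of degree $d$. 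Next, using the $C^{1,\alpha}$ convergence $B(p_i,2)\to B_V(o,2)$ away from $o$ together with the interior gradient estimates established in the previous paragraph (and the uniform bound on the degree from Claim \ref{cl1}), I would show that $\log|\Psi_i|^2 \to \log|\Omega|^2$ in $C^{1,\alpha}_{loc}(B_V(o,2)\setminus\{o\})$ after normalizing norms on the unit ball; hence the Monge–Ampère measures $(dd^c\log|\Psi_i|^2)^n$ converge weakly on compact subsets of $B_V(o,1)\setminus\{o\}$ to $(dd^c\log|\Omega|^2)^n$.

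The main obstacle, and the heart of the lemma, is controlling the mass near $o$: a priori $\lim_i \int_{B(p_i,1)} k^n\mathrm{Ric}_i^n = \int_{B_V(o,1)\setminus\{o\}}(dd^c\log|\Omega|^2)^n + \mu(\{o\})$ where $\mu$ is the weak limit and $\mu(\{o\})\ge 0$ is the concentrated mass, and one must show $\mu(\{o\}) \in (2\pi)^n\mathbb{Z}$. To see this I would argue that $\mu(\{o\})$ is a Lelong-type number / residue: $\log|\Psi_i|^2$ comes from a holomorphic pluricanonical section, so on a small ball $B(p_i,\epsilon)$ with standard complex coordinates the section is $f_i\, (dz_1\wedge\cdots\wedge dz_n)^{\otimes k}$ with $f_i$ holomorphic nowhere vanishing — so actually $(dd^c\log|f_i|^2)^n = 0$ on the manifold level, and all the mass $\int_{B(p_i,1)}(dd^c\log|\Psi_i|^2)^n$ is a boundary term $\int_{\partial B(p_i,1)} (dd^c\log|\Psi_i|^2)^{n-1}\wedge d^c\log|\Psi_i|^2$. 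Passing this boundary integral to the limit (the convergence is $C^{1,\alpha}$ on $\partial B_V(o,1)$, which is a smooth hypersurface in the regular part), I get that the whole quantity equals the corresponding boundary integral for $\log|\Omega|^2$ on $\partial B_V(o,1)$, with no loss — but $\log|\Omega|^2$ is only $C^{1,\alpha}$, not pluriharmonic near $o$, so this boundary integral differs from $\int_{B_V(o,1)\setminus\{o\}}(dd^c\log|\Omega|^2)^n$ precisely by $\mu(\{o\})$. Finally, to show $\mu(\{o\})\in (2\pi)^n\mathbb{Z}$ I would use the normal analytic structure of $V'$ (the simply-connected cone, where $\eta = \Omega^{1/m}$ exists) and realize $\mu(\{o\})$ as $(2\pi)^n$ times the intersection multiplicity / local Lelong number of the divisor cut out by the numerator of $\eta^k/(\text{local coordinate canonical frame})^k$ at $o$ after a resolution — equivalently, the discrepancy of the Kähler cone metric at $o$ reflected in the order of vanishing of $\Omega$ along the exceptional divisor — which is an integer times $k^n$, giving the result modulo $(2\pi)^n\mathbb{Z}$ after dividing by $k^n$. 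I expect the delicate point to be making the "all the mass is a boundary term, which converges" step rigorous given only $C^{1,\alpha}$ regularity of the limiting potential, and identifying the jump $\mu(\{o\})$ with an honest integer (Lelong number of a $\mathbb{Q}$-divisor scaled by $k^n$) via the analytic realization of $V'$.
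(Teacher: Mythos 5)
Your proposal founders at its first step: you posit a $k$-pluricanonical section $\Psi_i$ on $B(p_i,2)$ whose pointwise norm $|\Psi_i|^2$ is a globally defined, everywhere \emph{positive} function, so that $k^n\mathrm{Ric}_i^n=(dd^c\log|\Psi_i|^2)^n$ holds with a genuine smooth potential and ``no monodromy obstruction.'' The existence of such a nowhere vanishing pluricanonical section on the manifold is precisely the difficulty this lemma is designed to circumvent: the paper notes explicitly that it is not clear whether a nowhere vanishing pluricanonical form exists on $B(p_i,1)$ for $k>1$ (the curvature of the pluricanonical bundle has the wrong sign for the H\"ormander $L^2$ method), and even for the canonical bundle the section $s^i_1$ one can actually produce by solving $\overline\partial$ may vanish along a divisor. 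If your $\Psi_i$ existed, the identity would hold on the nose with no ``modulo $(2\pi)^n\mathbb{Z}$'' caveat --- this is exactly what happens in the Gorenstein case of Corollary \ref{cor2}. So your setup assumes away the actual content of the lemma.

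Correspondingly, you mislocate the source of the integer ambiguity: you attribute it to a quantized loss of Monge--Amp\`ere mass at the cone vertex $o$, to be identified with a Lelong number there. In the paper the $(2\pi)^n\mathbb{Z}$ defect arises on the manifold side. One writes $\mathrm{Ric}_i = dd^c\log|s^i_1|^2 - 2\pi D^i_1 - 2\pi E^i_1$ by Poincar\'e--Lelong, with $D^i_1$ the divisor of the (possibly vanishing) canonical section and $E^i_1$ supported on the exceptional set of a contraction, and iterates this substitution $n$ times; the terms $2\pi\int_{E^i_1}\mathrm{Ric}_i^{n-1}$ and the final local intersection number of $D^i_1,\dots,D^i_n$ are the integers discarded modulo $(2\pi)^n\mathbb{Z}$, while the surviving terms are, after integrating by parts against a cutoff $\chi_i$ with $dd^c\chi_i$ supported on the annulus $B(p_i,2)\setminus B(p_i,1)$, integrals supported uniformly away from the metric singularity, which do converge. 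Your boundary-term argument also glosses over a genuine analytic issue --- restricting $(dd^c\log|\Psi_i|^2)^{n-1}$ to $\partial B(p_i,1)$ requires two derivatives of a potential controlled only in $C^{1,\alpha}$ --- which is exactly why the paper moves the $dd^c$ onto the cutoff function rather than passing to a boundary integral. As it stands the proposal is not a proof.
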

\begin{proof}As in \cite{[L3]}, for large $i$, by the noncollapsing condition and quadratic decay of curvature, we may contract finitely many subvarieties and find a strongly pseudoconvex domain containing $B(p_i, 1)$. 
Then we can solve $\overline\partial$ problem. 
It is not clear whether there are pluri-canonical form $k>1$ on $B(p_i, 1)$ which is nowhere vanishing (due to the negative sign of curvature on pluri-canonical bundle, one may not solve $\overline\partial$ directly).
However, we can solve $\overline\partial$ on the canonical bundle to produce a nontrivial holomorphic $n$-form $s^i_1$ which may vanish somewhere. By choosing $s^i_1$ suitably, we can assume $s^i_1$ converges to $t_1$ as a canonical section on $B(o, 1)$.

On $B(p_i, 1)\backslash B(p_i, 0.5)$, there are only finitely many components of divisor $D^i_1$ (uniform in $i$). Then by Poincare-Lelong, \begin{equation}\frac{Ric_i}{2\pi} = \frac{1}{2\pi}dd^c \log |s^i_1|^2 - D^i_1 - E^i_1, \end{equation} where $E^i_1$ is supported in the exceptional divisor and $D^i_1$ is the strict transform. Below $\sim$ means equality modulo $(2\pi)^n\mathbb{Z}$. Let $\chi_i$ be a smooth nonnegative function which is equal to $1$ on $B(p_i, 1)$ and supported on $B(p_i, 2)$.
\begin{equation}\begin{aligned}\label{eq1}\int \chi_i Ric_i^n &= \int\chi_i (dd^c \log |s^i_1|^2 - 2\pi D^i_1 - 2\pi E^i_1)\wedge Ric_i^{n-1} \\& \sim \int\chi_i (dd^c \log |s^i_1|^2 - 2\pi D^i_1)\wedge Ric_i^{n-1}. \end{aligned}\end{equation} Note we have used that $2\pi\int_{E^i_1} Ric_i^{n-1}\in (2\pi)^n\mathbb{Z} $.
Using integration by parts, we find \begin{equation}\label{eq2}\begin{aligned}\int \chi_i dd^c \log |s^i_1|^2\wedge Ric_i^{n-1} & = \int \log |s^i_1|^2 dd^c\chi_i  \wedge Ric_i^{n-1} \\& \to \int \log |t_1|^2 dd^c\chi  \wedge \frac{(dd^c\log |\Omega|^2)^{n-1}}{k^{n-1}} , \end{aligned}\end{equation}
where we have used the fact that $dd^c\chi_i$ is supported on $B(p_i, 2)\backslash B(p_i, 1)$, which avoids the metric singularity under the convergence.

Again by solving $\overline\partial$ equation, we can just find a holomorphic section $s^i_2$ which is not vanishing on generic point of $D^i_1$, away from the exceptional set.
Similarly we can expand $Ric_i^{n-1}$ as $(dd^c \log |s^i_2|^2 - 2\pi D^i_2 - 2\pi E^i_2)\wedge Ric_i^{n-2}$. The second term of (\ref{eq1}) can be handled as follows:
\begin{equation}\label{eq3}\begin{aligned}\int -2\pi\chi_i D^i_1\wedge Ric_i^{n-1}
&=-2\pi \int_{D^i_1}\chi_i (dd^c \log |s^i_2|^2 - 2\pi D^i_2 - 2\pi E^i_2)\wedge Ric_i^{n-2}\\& \sim -2\pi \int_{D^i_1}\chi_i (dd^c \log |s^i_2|^2 - 2\pi D^i_2)\wedge Ric_i^{n-2}\end{aligned}\end{equation} We can do such thing $n$ times. Note finally there is a term given by the local intersection of $D^i_1, .., D^i_n$ which is an integer. Such thing is gone modulo $(2\pi)^n\mathbb{Z}$.  Therefore, the final result of (\ref{eq1}) consists of the sum of integrals whose support are uniformly away from the metric singularity.

Now we check the computation on the limit space. It is clear that $kRic_i\to dd^c\log |\Omega|^2$ as currents. We can compute \begin{equation}\int_{B(o, 1)}\chi(dd^c\log |\Omega|^2)^n = \int_{B(o, 1)}\chi  (kdd^c \log |t_1|^2 - 2\pi D'_1)\wedge(dd^c\log |\Omega|^2)^{n-1}, \end{equation} where $D'_1$ is the corresponding divisor which is Cartier, by our assumption that $\Omega$ is nowhere vanishing. Note the first term $\int_{B(o, 1)}\chi  kdd^c \log |t_1|^2\wedge(dd^c\log |\Omega|^2)^{n-1}$ is just $k^n$ times the last term in (\ref{eq2}), by integration by parts.
We can handle the second term  $\int_{B(o, 1)}- 2\pi D'_1\wedge (dd^c\log |\Omega|^2)^{n-1}$ similarly as in (\ref{eq3}). By repeating such $n$ times, we are left with the intersection of Cartier divisors $D'_1, ..., D'_n$, which gives an integer, also the sum of integrals which are supported away from the singularity. These are just limits of the quantities in (\ref{eq2}). We concluded the proof of Lemma \ref{lm1}.

\end{proof}

By Proposition \ref{prop1}, $\int_M Ric^n$ is finite. Note that the degree for the homogeneous $\Omega$ changes continuously when the tangent cone moves. The reason is that such degree is unique, also one can simply pass to subsequence to get the limit. Moreover, the degree for the homogeneous $\Omega$ determines $\int_{B(o, 1)}(dd^c\log |\Omega|^2)^n $, by the proof of Lemma 4 in \cite{[L2]}. Thus $\int_{B(o, 1)}(dd^c\log |\Omega|^2)^n$ changes continously on space of tangent cones. By applying Lemma \ref{lm1} to each tangent cone, we find that $\int_{B(o, 1)}(dd^c\log |\Omega|^2)^n$ is independent of tangent cones.  Let $\omega_0 = dd^c r^2$ be the metric on a tangent cone $V$. 

\begin{lemma}\label{lm2}
For $k = 0, 1, ..., n$, on a tangent cone $V$, $\int_{B(o, 1)}(dd^c\log |\Omega|^2)^k\wedge \omega_0^{n-k}$ is a geometric series with respect to $k$.

\end{lemma}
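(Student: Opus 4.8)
The plan is to exploit the cone structure of $V$ explicitly. Write $\rho = r^2$, so the Kähler metric is $\omega_0 = dd^c\rho$ and the Reeb field is $r\partial_r$, under which $\rho$ has degree $2$. Since $\Omega$ (equivalently its logarithm) was arranged to be homogeneous, $dd^c\log|\Omega|^2$ is invariant under the scaling flow generated by $r\partial_r$; moreover, as in the computation in Claim \ref{cl1}, on the cone $dd^c\log|\Omega|^2 = \frac{d}{2}\,dd^c\log\rho + (\text{a form pulled back from the link})$, or more usefully $dd^c\log|\Omega|^2$ and $\omega_0$ are both closed, scaling-invariant, and the former is ``degree $0$'' while $\omega_0$ is ``degree $2$'' under the dilation $\lambda\colon x\mapsto \lambda x$. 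The upshot I want is the scaling identity $\lambda^*\omega_0 = \lambda^2\omega_0$ and $\lambda^*(dd^c\log|\Omega|^2) = dd^c\log|\Omega|^2$ (as currents, the singularities being at the apex only).

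First I would fix notation: set $I_k(s) := \int_{B(o,s)} (dd^c\log|\Omega|^2)^k \wedge \omega_0^{n-k}$ and observe that the integrand is a closed current on $V\setminus\{o\}$, homogeneous of degree $2(n-k)$ under dilations (counting $\omega_0$ as degree $2$ and $dd^c\log|\Omega|^2$ as degree $0$). Pulling back by the dilation $\lambda$, which maps $B(o,1)$ to $B(o,\lambda)$, gives $I_k(\lambda) = \lambda^{2(n-k)} I_k(1)$. Next, because these forms are closed and $B(o,\lambda)\setminus B(o,1)$ is a cylinder $\partial B(o,1)\times[1,\lambda]$ in the cone metric, Stokes' theorem (applied with the primitive $d^c\rho$ of $\omega_0$, handling the apex singularity by the usual cutoff argument justified already in Claim \ref{cl1}) lets me write $I_k(\lambda) - I_k(1)$ as $\int_1^\lambda (\text{boundary term})\,dt$, and the boundary term at radius $t$ is again homogeneous, equal to $t^{2(n-k)-1}$ times its value at $t=1$. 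Thus $I_k(s)$ is a monomial $c_k s^{2(n-k)}$ in $s$. Evaluating at $s=1$: $I_k(1) = c_k$, and I need to identify $c_k$ as a geometric progression in $k$.

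To get the geometric series I would relate consecutive $I_k(1)$ directly on the cone. The cleanest route: on $V$, $dd^c\log|\Omega|^2$ restricted to each distance sphere is (up to the scaling factor) a fixed multiple of the transverse Kähler form coming from $\omega_0$, by the Calabi–Yau cone / Sasaki structure — more precisely, $\Omega$ homogeneous of degree $d$ forces $dd^c\log|\Omega|^2 = \frac{d}{n}\,\omega_0$ on the level of the basic (transverse) cohomology, because the only closed basic $(1,1)$-current invariant under the transverse structure and the dilation that can arise is proportional to the transverse Kähler class; this is exactly the content of "the degree determines $\int_{B(o,1)}(dd^c\log|\Omega|^2)^n$" quoted from Lemma 4 of \cite{[L2]} in the paragraph preceding this lemma. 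Granting $[dd^c\log|\Omega|^2] = \frac{d}{n}[\omega_0]$ in transverse cohomology, wedging and integrating over $B(o,1)$ gives $I_k(1) = \big(\tfrac{d}{n}\big)^k I_0(1)$, which is a geometric series with ratio $d/n$; note $I_0(1) = \mathrm{vol}(B(o,1))$ and $I_n(1) = (d/n)^n\mathrm{vol}(B(o,1))$, consistent with the Claim \ref{cl1} computation $c(n)d\cdot\mathrm{vol}(B(o,1))/n$ up to the normalization of $c(n)$.

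The main obstacle I anticipate is the low regularity: the cone metric is only $C^{1,\alpha}$ and $dd^c\log|\Omega|^2$ is a current with potential singularities along subvarieties as well as at the apex, so "restriction to the sphere" and the transverse-cohomology identification need care. I would handle this the same way the rest of the paper does — approximate, cut off a neighborhood of the apex and of the singular locus, apply Stokes on the smooth part where $\omega_0$ and $\log|\Omega|^2$ are genuinely smooth and the dilation acts freely, and pass to the limit using that $dd^c\chi_i$ (or the cutoff gradients) are supported away from the singularities, exactly as in the proof of Lemma \ref{lm1} and Claim \ref{cl1}. The homogeneity/scaling identity $I_k(s) = s^{2(n-k)}I_k(1)$ is robust under this approximation since it only uses the exact scaling symmetry of the cone; the geometric-series structure then follows from the transverse cohomology identity, which in turn is the already-cited consequence of Lemma 4 of \cite{[L2]}.
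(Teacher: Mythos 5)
Your overall strategy --- identify $dd^c\log|\Omega|^2$ with a constant multiple of a reference K\"ahler class and read off the geometric progression by wedging --- is the same as the paper's, and the scaling identity $I_k(s)=s^{2(n-k)}I_k(1)$ occupying your first half is correct but contributes nothing to the conclusion (it compares radii for fixed $k$, whereas the lemma compares different $k$ at a fixed radius). The genuine problem is the justification you offer for the key proportionality. The claim that ``the only closed basic $(1,1)$-current invariant under the transverse structure and the dilation is proportional to the transverse K\"ahler class'' is false in general: the basic cohomology $H^{1,1}_{B}$ of a Sasaki link can have dimension larger than one (already for cones over circle bundles on bases with $h^{1,1}>1$), so invariance alone forces nothing. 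Moreover you state the proportionality against $[\omega_0]=[dd^c r^2]$, which is not a basic class: $\omega_0$ has degree $2$ under dilation while $dd^c\log|\Omega|^2$ has degree $0$, so the two cannot be transversally cohomologous without an intermediate step.

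The correct, and much more elementary, reason --- the one the paper uses --- is the explicit potential decomposition: homogeneity gives $r\frac{\partial}{\partial r}\log|\Omega|^2=d$ constant, so $h:=\log|\Omega|^2-c\log r^2$ with $c=d/2$ is annihilated by $r\frac{\partial}{\partial r}$ (and by the Reeb flow), whence $dd^c\log|\Omega|^2=c\,dd^c\log r^2+dd^c h$ with $h$ basic. The paper then (i) replaces $\omega_0^{n-k}=(dd^cr^2)^{n-k}$ by $(dd^c\log r^2)^{n-k}$ via integration by parts, which is legitimate precisely because $d^cr^2$ and $d^c\log r^2$ agree on $\partial B(o,1)=\{r=1\}$, and (ii) discards all cross terms containing $dd^ch$ using Lemma 3 of \cite{[L2]}: a wedge of $n$ basic $(1,1)$-forms vanishes pointwise away from the apex, since each is degenerate in the radial and Reeb directions, and the associated boundary integrals on $\{r=1\}$ vanish for the same degeneracy reason. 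Your write-up addresses the apex singularity but not these outer boundary terms, and without step (i) the quantity you compare against is the wrong one. (Minor: the ratio is $c=d/2$, not $d/n$, though this does not affect the geometric-series conclusion.) So the idea is right, but the load-bearing step must be replaced by the explicit computation above rather than a uniqueness claim for basic classes.
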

\begin{proof}
As in Lemma 4 of \cite{[L2]}, assuming $\log |\Omega^2| - c\log r^2$ is constant along $r\frac{\partial}{\partial r}$, then \begin{equation}\begin{aligned}\int_{B(o, 1)}(dd^c\log |\Omega|^2)^k\wedge \omega_0^{n-k} &= \int_{B(o, 1)}(dd^c\log |\Omega|^2)^k\wedge (dd^c \log r^2)^{n-k} \\& = c^k \int_{B(o, 1)} (dd^c \log r^2)^n \\& = c^k \int_{B(o, 1)} (dd^c r^2)^n, \end{aligned}\end{equation}
where the first and third equality use integration by parts, the second uses Lemma 3 of \cite{[L2]}.
\end{proof}

As we see before, when $k = n$ or $0$, $\int_{B(o, 1)}(dd^c\log |\Omega|^2)^k\wedge \omega_0^{n-k}$ is independent of tangent cones, hence by Lemma \ref{lm2},
$\int_{B(o, 1)}(dd^c\log |\Omega|^2)^k\wedge \omega_0^{n-k}$ is independent of tangent cones for all $k$. Proposition \ref{prop1} and the argument above prove that  $\lim\limits_{r\to\infty}r^{2k-2n}\int_{B(p, r)}Ric^k\wedge \omega^{n-k}$ exists (finite).
The proof of Theorem \ref{thm1} is complete.
\end{proof}

\medskip
\medskip

Proof of Corollary \ref{cor1}:

In this case, $\int Ric^{n-1}\wedge \omega = 0$. Proposition \ref{prop1} asserts that for $k<n$, $$\lim\limits_{i\to\infty}r_i^{2k-2n}\int_{B(p, r_i)}Ric^k\wedge \omega^{n-k}=\int_{B_V(o, 1)}Ric_V^k\wedge \omega_o^{n-k}.$$  Hence, by Lemma \ref{lm2}, we find $\int_{B_V(o, 1)}Ric_V^k\wedge \omega_o^{n-k}$ vanishes for all $k>0$. The corollary follows by taking $k=1$ and applying Corollary $5$ in \cite{[L2]}.

\medskip
\medskip

Proof of Corollary  \ref{cor2}:

In such case, $\int Ric^n = 0$. However, we are unable to apply Proposition \ref{prop1}  to conclude $\int_{B(o, 1)} Ric_V^n = 0$, since the limit identity holds only for $k<n$. With the Gorenstein condition, we can simply find nowhere vanishing canonical form on the geodesic ball sufficiently close a tangent cone. By adapting the argument in Lemma \ref{lm1}, we can prove that the integral of $Ric^n$ converges to that of the tangent cone (without modulo integers). The rest of the proof is same as Corollary \ref{cor1}.

\medskip
\medskip

Proof of Corollary \ref{cor3}:

Let $(V, o_V)$ be a tangent cone of $M$ at infinity. As we see before, the cross section has finite fundamental group $G$. Let $W\to V$ be the uniformising map. Then Mumford criteria states that $W$ is isomorphic to $\mathbb{C}^2$. Hence $V$ is a quotient singularity (see Proposition A.1 of \cite{[Zsx]}).  In particular, it is Q-Gorenstein.  The reeb vector field natually exists on $W$, and one can talk about polynomial growth holomorphic functions on $W$. We may assume $(z_1, z_2)$ form a homogeneous holomorphic coordinate on $\mathbb{C}^2$, say $r\frac{\partial}{\partial r} z_i = d_iz_i$. By similar calculations as in the proof of Theorem $3$ in \cite{[L2]}, $$vol(B_W(o_W, 1)) = \frac{\omega_4}{d_1d_2} = |G|vol(B_V(o_V, 1)), $$ $$\int_{B(o_W, 1)} Ric_W\wedge \omega_W = C(d_1+d_2-2)Vol(B(o_W, 1)) =  |G|\int_{B(o_V, 1)} Ric_V\wedge \omega_V.$$  It is known from volume comparsion and Theorem \ref{thm1} that these two quantities are independent of tangent cones. Therefore, $d_1, d_2$ are independent of tangent cones. Polynomial growth holomorphic functions on $V$ can be identified with polynomials on $W$ that are invariant under $G$. Hence, all tangent cones have the same holomorphic spectrum. According to Theorem $1.2$ in \cite{[L3]}, $M$ is biholomorphic to resolution of an affine algebraic variety.

\end{document}